\def\be{\begin{equation}}
\def\ee{\end{equation}}
\newtheorem{thm}{Theorem}[section]
\newtheorem{lem}{Lemma}[section]
\newtheorem{prop}{Proposition}[section]
\theoremstyle{definition}
\newtheorem{example}{Example}[section]
\theoremstyle{remark}
\numberwithin{equation}{section}
\newcommand{\rnn}{\mathbb{R}^{2n}}
\newcommand{\cn}{\mathbb{C}^{n}}
\newcommand{\ci}{C^\infty}
\newcommand{\Lap}{\overline \Delta}
\begin{document}

\title{Lagrangian angles of family of Lagrangian fibrations under mean curvature flow}
\author{John Man-shun  Ma, Tom Yau-heng Wan}

\address{Department of Mathematics,
The Chinese University of Hong Kong,
Shatin, Hong Kong, P. R. China.} \email{msma@math.cuhk.edu.hk}

\address{Department of Mathematics,
The Chinese University of Hong Kong,
Shatin, Hong Kong, P. R. China.} \email{tomwan@math.cuhk.edu.hk}

%

\begin{abstract}
In this paper, we discuss the Lagrangian angles of a family of Lagrangian fibrations moved under mean curvature flow. In the case of one complex dimension, the angle function is shown to satisfy a degenerated partial differential equation. We prove that any smooth solution to the equation also corresponds to a smooth foliation of curves under mean curvature flow.
\end{abstract}
\maketitle
\markboth{}{}

\section{Introduction}
In this paper we study Lagrangian mean curvature flow. The motivation comes from the SYZ conjecture \cite {SYZ}, which postulates the existence of special Lagrangian tori and dual tori fibration on a Calabi-Yau manifold $X$ and its mirror $\hat X$ respectively. To prove the conjecture, a difficulty is to construct the fibration. As special Lagrangian submanifold is minimal, one hope that the mean curvature flow will move a Lagrangian submanifold to a special one.\\

Many results about mean curvature flow of a single Lagrangian submanifold have been derived in the last decade \cite{S}, \cite{SW}, \cite{TY}. In this paper, instead of a single Lagrangian submanifold, we consider a family of Lagrangian submanifolds which foliates a domain in $\cn$ and is moved together under the mean curvature flow. The idea was suggested by S.T. Yau and Conan Leung. Doing this might cause more complications, but the advantage is that we can consider the Lagrangian angles as a function in the ambient space. In the case of complex dimension one, the Lagrangian angle function is shown to satisfy a nonlinear second order degenerated partial differential equation. Conversely, we prove that any smooth solution of the PDE corresponds to a solution to mean curvature flow of a smooth foliation.\\

The paper is organized as follows. In section 2 we state the basic facts concerning Lagrangian submanifold and mean curvature flow that we need. In section 3 we consider the foliation of an open set $W$ in $\mathbb C$ by Lagrangian submanifolds and derive equation for its Lagrangian angle. In section 4 we prove the main theorem of this paper. In section 5 we consider examples of invariant solutions.

\section{Basic notations in Riemannian geometry and mean curvature flow in $\mathbb{C} ^n$}

First we review some notations and results used in this article. Let $(L^n,g)$ and $(M^{n+m}, \bar{g})$ be Riemannian manifolds. We write $\nabla$ and $\overline{\nabla}$ to denote the Levi-Civita connection of $L$ and $M$ respectively. Let
\begin{equation*}
F: L \to M
\end{equation*}
be an isometric embedding, which is the same as saying that $\nabla =(\overline \nabla)^\top$ if we consider $L\subset M$ as a submanifold. Let
\begin{equation*}
\begin{split}
A: TL\times TL &\longrightarrow NL\\
(X, Y)& \mapsto (\overline {\nabla}_X Y)^\perp
\end{split}
\end{equation*}
be the second fundamental form and
\begin{equation}
H= trA
\end{equation}
be the mean curvature vector.

Given a $\ci$ function $h: M \to \mathbb R$, the Laplacian of $h$, denoted $\overline \Delta h$, is defined as
\begin{equation*}
\overline \Delta h = tr \overline \nabla ^2 h \ .
\end{equation*}
By composition with $F: L \to M$, we get a function $F^*h :L\to \mathbb R$. The two Laplacians $\Delta F^*h$ and $\overline \Delta h$ are related as follows: Let $x\in L$ and $e_1, ..., e_n$, $f_1, ..., f_m$ be orthonormal frame in $T_xL$ and $NL_x$ respectively, then

\begin{equation}
\begin{split}
\Lap h &= tr \overline \nabla ^2 h \\
&= \sum _{i=1}^n \overline \nabla ^2h (e_i, e_i) + \sum _{j=1}^m \overline \nabla ^2h (f_j, f_j)\\
&= \sum _{i=1}^n (h_{ii}- \overline \nabla _{\overline \nabla _{e_i} e_i} h) + \sum _{j=1}^m \overline \nabla ^2h (f_j, f_j)\\
&= \sum _{i=1}^n (h_{ii}- \nabla _{\nabla _{e_i} e_i} h -\overline \nabla _{(\overline {\nabla}_{e_i} e_i) ^\perp} h) + \sum _{j=1}^m \overline \nabla ^2h (f_j, f_j)\\
&= \Delta F^*h - \overline \nabla _H h+ \sum _{j=1}^m \overline \nabla ^2h (f_j, f_j) \ .\\
\end{split}
\end{equation}

Now consider the case $M= \cn \cong \rnn$ equipped with the standard euclidean metric $\bar g$. On $\rnn$ we denote $y^{\alpha}, \alpha =1, ..., 2n$, the standard coordinates, so the complex structure $J$ is given by
\begin{equation*}
\begin{split}
J(\frac {\partial}{\partial y^i})&= \frac {\partial}{\partial y^{n+i}} \ ,\\
J(\frac {\partial}{\partial y^{n+i}})&= -\frac {\partial}{\partial y^i} \ .
\end{split}
\end{equation*}

The symplectic form $w$ is defined as $w(X,Y)=\bar g (JX,Y)$. A submanifold $F: L\hookrightarrow \cn$ is called a Lagrangian submanifold, if dim$L = n$ and the restriction of the symplectic form to $L$ is zero, i.e.
\[F^* w =0 \ .\]
Note that the definition implies that $J$ will send tangent vector of $L$ to its normal bundle.

Now consider the holomorphic $(n,0)$-form $dz=dz^1\wedge ... \wedge dz^n$ defined on $\cn$, where $dz^i=dy^i+\sqrt{-1}dy^{n+i}$. It is well known that if $L$ is a Lagrangian submanifold, then
\[
dz|_{T_xL}=e^{i\theta (x)} dvol_L
\]
for a $\ci$ function $\theta :L\to \mathbb R /2\pi \mathbb Z$. The function $\theta$ is called the {\bf Lagrangian angle} of $L$. If $n=1$, the Lagrangian angle is just the angle which the tangent vector makes with the $x$-axis. By differentiating the above equation, we obtain
\begin{equation}
d\theta = \iota _H w \ .
\end{equation}

Now we consider the mean curvature flow. The mean curvature flow of $F_0:L \to M$ is a $\ci$ map $$F: L \times [0, T) \to M$$ which satisfies
\begin{equation*}
\begin{cases}
\frac{\partial}{\partial t}F(x,t)=H(x,t)\\
F(x,0)=F_0(x) \ .
\end{cases}
\end{equation*}

In \cite{S}, the author studied the mean curvature flow of Lagrangian submanifold $L$ in K\"{a}hler manifold $(M,w)$. In general, the mean curvature flow might not preserve the Lagrangian condition. A necessary condition is that $d\iota _Hw=0$, as closed form on $L$ corresponds to infinitesimal Lagrangian deformation. The condition can be satisfied, for example, in the case that $M$ is K\"{a}hler Einstein. In the case $M$ is Calabi-Yau, for each $t$ the submanifold $L_{t}:=F(\cdot,t)$ is thus Lagrangian, so the Lagrangian angle is a well-defined map
\[\theta: L \times [0,T) \longrightarrow \mathbb R /2\pi \mathbb Z \ .\]
It was proven in \cite{TY} that in this case the Lagrangian angle satisfies the following ``heat equation"
\begin{equation}
\frac{\partial \theta}{\partial t} = \Delta_{L_t} \theta \ .
\end{equation}

\section{Lagrangian Angle of a Foliation}
In this section, we consider a $\ci $ map $$F: L \times U \times [0,T) \longrightarrow \cn \ ,$$
where $L$ is a $n$ dimensional manifold, and $U$ is an open set in $\mathbb{R}^n$ such that:
\begin{enumerate}
\item For each fixed $u=(u_1,...,u_n)$ and $t$, $F(\cdot, u, t): L \longrightarrow \cn$ is a embedding of Lagrangian submanifold.
\item For each fixed $t$, the map $F_t :=F(\cdot, \cdot, t): L \times U \longrightarrow \cn$ is a diffeomorphism onto an open set $W_t$ in $\cn$.
\item For each fixed $u=(u_1,...,u_n)$, $F(\cdot, u, \cdot): L \times [0,T) \longrightarrow \cn$ is a solution to the mean curvature flow, i.e. $$\frac{\partial}{\partial t} F(x,u,t)= H(x,u,t) \ ,$$
\end{enumerate}
where $H(x,u,t)=H(F(x,u,t))$ is the mean curvature vector to $F(\cdot ,u, t)$ at $x$.

Intuitively we are considering a foliation of $W_0 \subset \cn$ by Lagrangian submanifolds, and each of the submanifolds are then moved by mean curvature flow. Assuming that the solution at time $t$ is still a smooth foliation of an open set $W_t$, the Lagrangian angle can be regarded as a smooth function
\[\theta: L \times U \times [0,T) \longrightarrow \mathbb R / 2\pi \mathbb Z \ .\]
By condition (2) we can pullback the Lagrangian angle $\theta$ to a function on $W=\{(y,t): y\in W_t\}$. More precisely, we define
\begin{equation*}
\begin{split}
\tilde{\theta} : W & \longrightarrow \mathbb R /2\pi \mathbb Z\\
\tilde{\theta}(y,t) & = \theta (F_t^{-1}(y),t) \ .
\end{split}
\end{equation*}

Now we derive a equation for $\tilde{\theta}$. Let $(y,t)\in W$ be given. Then $y=F_t(x,u)$ for a unique $(x,u)\in L \times U$. Let $e_1, ..., e_n$ be orthonormal basis for the vector space $(F_t)_*T_xL$. The Lagrangian Condition (1) implies that $Je_1, ..., Je_n$ forms a orthonormal basis on $NL_x$. Thus by (2.2) and (2.4) we have
\begin{equation}
\begin{split}
\Lap \tilde{\theta} &= \Delta _{L_t} F_t^*\tilde{\theta}- \overline \nabla _H\tilde{\theta}+ \sum _{i=1}^n \overline \nabla ^2\tilde{\theta} (Je_i, Je_i)\\
&= \Delta _{L_t}\theta- \overline \nabla _H \tilde{\theta}+ \sum _{i=1}^n \overline \nabla ^2\tilde{\theta} (Je_i, Je_i)\\
&= \frac{\partial \theta}{\partial t}- \overline \nabla _H \tilde{\theta}+ \sum _{i=1}^n \overline \nabla ^2\tilde{\theta} (Je_i, Je_i) \ .\\
\end{split}
\end{equation}

By definition, we have $$\tilde\theta (y,t)=\tilde \theta (F_t(x,u), t)=\theta (x,u,t) \ ,$$
hence
$$\overline \nabla_{\frac{\partial F_t}{\partial t}}\tilde \theta + \frac{\partial \tilde\theta}{\partial t}
= \frac{\partial \theta}{\partial t} \ .$$

As $\frac{\partial F_t}{\partial t} = H$, (3.1) implies that the phase angle satisfy the following differential equation
\begin{equation*}
\Lap \tilde{\theta} = \frac{\partial \tilde{\theta}}{\partial t} + \sum _{i=1}^n \overline \nabla ^2\tilde{\theta} (Je_i, Je_i) \ ,
\end{equation*}
or
\begin{equation}
\frac{\partial \tilde{\theta}}{\partial t} = \sum _{i=1}^n \overline \nabla ^2\tilde{\theta} (e_i, e_i) \ .
\end{equation}

In the case that $n=1$, we actually have $e:=e_1=(\cos\tilde{\theta},\sin\tilde{\theta})$, so we can write the equation completely in terms of $\theta$. We summarize the result in the following

\begin{thm}
Let $\theta: W \longrightarrow \mathbb{R}/2\pi \mathbb Z$ corresponds to the Lagrangian angles for a foliation moved under mean curvature flow, where $W \subset \mathbb{R}^2 \times [0,T)$. Then $\theta$ satisties
\begin{equation}
\frac{\partial \theta}{\partial t} = \theta_{11}\cos ^2 \theta+ 2\theta_{12}\sin \theta \cos \theta + \theta_{22}\sin^2 \theta \ ,
\end{equation}
where $\theta _{ij} := \frac{\partial ^2 \theta}{\partial y^i \partial y^j}$ denotes the second partial derivative with respect to the standard coordinates $y^i$, $y^j$ in $\mathbb R ^2$.
\end{thm}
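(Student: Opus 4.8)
The plan is to specialize the already-derived equation (3.2) to the one-dimensional case and make the single tangent vector explicit. All of the geometric content is contained in the derivation of (3.2), which expresses $\partial_t\tilde\theta$ as the trace of the ambient Hessian of $\tilde\theta$ over an orthonormal frame of the leaf, so only two things remain: identifying that frame when $n=1$, and rewriting the ambient Hessian in the flat coordinates $y^1,y^2$.

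First I would recall that for $n=1$ the leaf $L_t$ is a curve in $\mathbb{R}^2$, so $(F_t)_*T_xL$ is spanned by a single unit vector $e=e_1$ and equation (3.2) reduces to
\[
\frac{\partial\tilde\theta}{\partial t}=\overline\nabla^2\tilde\theta(e,e) \ .
\]
I would then pin down $e$. Writing $e=(\cos\phi,\sin\phi)$ for some angle $\phi$ and evaluating the holomorphic form $dz=dy^1+\sqrt{-1}\,dy^2$ on $e$ gives $dz(e)=\cos\phi+\sqrt{-1}\sin\phi=e^{\sqrt{-1}\phi}$, while $dvol_L(e)=1$; comparing with the defining relation $dz|_{T_xL}=e^{\sqrt{-1}\theta}dvol_L$ shows $\phi=\theta \bmod 2\pi$, hence $e=(\cos\tilde\theta,\sin\tilde\theta)$ as asserted in the text. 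The sign ambiguity in the choice of unit tangent is harmless because $e$ enters (3.2) quadratically.

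Next I would use that the ambient metric on $M=\mathbb{R}^2$ is the flat Euclidean metric, so the Levi-Civita connection $\overline\nabla$ has vanishing Christoffel symbols and the ambient Hessian is simply the matrix of second partials, $\overline\nabla^2\tilde\theta(\partial_\alpha,\partial_\beta)=\partial_\alpha\partial_\beta\tilde\theta=\tilde\theta_{\alpha\beta}$. Substituting $e=(\cos\theta,\sin\theta)$ into the bilinear form then yields
\[
\overline\nabla^2\tilde\theta(e,e)=\tilde\theta_{11}\cos^2\theta+2\tilde\theta_{12}\sin\theta\cos\theta+\tilde\theta_{22}\sin^2\theta \ ,
\]
which, combined with the reduced equation above, is exactly (3.4) (with $\theta$ denoting $\tilde\theta$).

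There is no serious obstacle here, since the substantive step is the frame-dependent identity (3.2) established earlier. The only points demanding care are the explicit identification $e=(\cos\theta,\sin\theta)$ — together with the observation that the quadratic appearance of $e$ removes any orientation issue — and the bookkeeping that the ambient Hessian in flat coordinates carries no connection terms, so that $\overline\nabla^2\tilde\theta$ genuinely reduces to the plain Hessian $(\tilde\theta_{ij})$.
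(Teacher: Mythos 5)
Your proposal is correct and follows essentially the same route as the paper: the authors likewise treat Theorem 3.1 as the $n=1$ specialization of equation (3.2), noting $e=(\cos\tilde\theta,\sin\tilde\theta)$ and reading off the flat Hessian in the coordinates $y^1,y^2$. Your extra justifications (deriving $e=(\cos\theta,\sin\theta)$ from $dz|_{T_xL}=e^{\sqrt{-1}\theta}dvol_L$, and observing the orientation ambiguity is harmless since $e$ appears quadratically) are sound and merely make explicit what the paper leaves implicit.
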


\section {Main Theorem}
Before going into the proof of the converse of {\bf Theorem 3.1},we define the following notations. Let $V_1$ be the time dependent vector field defined by $V_1=(\cos \theta, \sin \theta)$. We also write $V_2:= JV_1=(-\sin \theta, \cos \theta)$.

\begin{lem}
The equation (3.2), or (3.3), is equivalent to
\begin{equation}
\frac{\partial \theta}{\partial t} = \overline \nabla _{V_1}\overline \nabla _{V_1}\theta
- \overline \nabla _{V_1} \theta \overline \nabla _{V_2} \theta \ .
\end{equation}
\end{lem}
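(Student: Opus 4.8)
The plan is to recognize that both equation (3.3) and the claimed equation (4.1) are simply two different expansions of a single geometric quantity, namely the flat Hessian $\overline\nabla^2\theta(V_1,V_1)$ evaluated on the unit tangent field $V_1=(\cos\theta,\sin\theta)$. Indeed, in the case $n=1$ the single orthonormal frame vector is $e_1=(\cos\theta,\sin\theta)=V_1$, so equation (3.2) reads $\partial\theta/\partial t=\overline\nabla^2\theta(e_1,e_1)=\overline\nabla^2\theta(V_1,V_1)$. Thus the lemma reduces to the purely algebraic identity $\overline\nabla^2\theta(V_1,V_1)=\overline\nabla_{V_1}\overline\nabla_{V_1}\theta-\overline\nabla_{V_1}\theta\,\overline\nabla_{V_2}\theta$, together with the check that the same left-hand side reproduces the coordinate expression in (3.3).

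For the coordinate side I would use that the Levi-Civita connection of the flat metric $\bar g$ annihilates the coordinate fields $\partial/\partial y^i$, so that $\overline\nabla^2\theta(\partial_i,\partial_j)=\theta_{ij}$. Since the Hessian is a $(0,2)$-tensor, its value on $V_1$ depends only on the components $(\cos\theta,\sin\theta)$ at the point and not on their derivatives, giving $\overline\nabla^2\theta(V_1,V_1)=\theta_{11}\cos^2\theta+2\theta_{12}\sin\theta\cos\theta+\theta_{22}\sin^2\theta$, which is exactly the right-hand side of (3.3). For the intrinsic side I would apply the defining formula $\overline\nabla^2\theta(X,Y)=X(Y\theta)-(\overline\nabla_XY)\theta$ with $X=Y=V_1$, obtaining $\overline\nabla^2\theta(V_1,V_1)=\overline\nabla_{V_1}\overline\nabla_{V_1}\theta-(\overline\nabla_{V_1}V_1)\theta$.

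The central computation is then $\overline\nabla_{V_1}V_1$. Writing $V_1=\cos\theta\,\partial_1+\sin\theta\,\partial_2$ and using flatness so that the connection differentiates only the coefficients, I find $\overline\nabla_{V_1}V_1=(V_1\theta)(-\sin\theta,\cos\theta)=(\overline\nabla_{V_1}\theta)\,V_2$. Substituting this back yields $(\overline\nabla_{V_1}V_1)\theta=(\overline\nabla_{V_1}\theta)(\overline\nabla_{V_2}\theta)$ and hence $\overline\nabla^2\theta(V_1,V_1)=\overline\nabla_{V_1}\overline\nabla_{V_1}\theta-\overline\nabla_{V_1}\theta\,\overline\nabla_{V_2}\theta$, which is precisely (4.1). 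Comparing the two expansions completes the equivalence.

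I expect the only delicate point to be the evaluation of $\overline\nabla_{V_1}V_1$ and the observation that it equals exactly $(\overline\nabla_{V_1}\theta)V_2$. One must be careful that, although $V_1$ has $\theta$-dependent (hence non-constant) components, the tensoriality of the Hessian still makes the coordinate computation in (3.3) legitimate, while it is precisely the non-constancy of these components that produces the extra first-order term $-\overline\nabla_{V_1}\theta\,\overline\nabla_{V_2}\theta$ in the intrinsic form. Everything else is a routine unwinding of the definitions, requiring no estimates or regularity input.
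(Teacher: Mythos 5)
Your proposal is correct and follows essentially the same route as the paper: both proofs identify the right-hand side of (3.2) with the flat Hessian $\overline\nabla^2\theta(V_1,V_1)$, expand it as $\overline\nabla_{V_1}\overline\nabla_{V_1}\theta-\overline\nabla_{\overline\nabla_{V_1}V_1}\theta$, and reduce everything to the key computation $\overline\nabla_{V_1}V_1=(\overline\nabla_{V_1}\theta)V_2$. Your additional remark that tensoriality of the Hessian justifies the coordinate form (3.3) is a small clarification the paper leaves implicit, but the argument is the same.
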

\begin{proof}
By (3.2)
\begin{equation*}
\begin{split}
\frac{\partial \theta}{\partial t} &= \overline \nabla ^2\tilde{\theta} (V_1, V_1)\\
&= \overline \nabla _{V_1}\overline \nabla _{V_1}\theta -\overline \nabla _{\overline\nabla _{V_1}V_1} \theta
\end{split}
\end{equation*}
As $V_1=(\cos \theta, \sin \theta)$
\begin{equation*}
\begin{split}
\overline \nabla _{V_1} V_1 &= \cos \theta \frac{\partial}{\partial x} (\cos \theta ,\sin \theta) + \sin \theta \frac{\partial}{\partial y} (\cos \theta, \sin \theta)\\
&= \theta_x \cos \theta (-\sin \theta, \cos \theta) + \theta_y \sin \theta (-\sin \theta, \cos \theta)\\
&= (\overline \nabla _{V_1}\theta) V_2
\end{split}
\end{equation*}
Thus we have $\frac{\partial \theta}{\partial t} = \overline \nabla _{V_1}\overline \nabla _{V_1}\theta - \overline \nabla _{V_1} \theta \overline \nabla _{V_2} \theta$.
\end{proof}

Now we can prove the main theorem:

\begin{thm}
Let $\theta : W \longrightarrow \mathbb R /2\pi \mathbb Z$ be a $\ci$ function satisfying equation (3.3), where $W \subset \mathbb{R}^2 \times [0,T)$. Then $\forall y\in W_0=\{(z,t)\in W: t=0 \}$, we can find $\{L_t: t \in [0,T_y) \}$, where $T_y \leq T$, such that
\begin{enumerate}
\item each $L_t$ is an integral curve of the vector field $V_1(\cdot,t)$, and $y\in L_0$
\item The family $L_t: t \in [0,T_y)$ is smooth in $t$ and moved by mean curvature flow.
\end{enumerate}
\end{thm}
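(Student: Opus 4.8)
The plan is to reconstruct the foliation directly from the given angle function $\theta$, using the form (4.1) of the equation, and to verify \emph{a posteriori} that the constructed family evolves by mean curvature flow. In one complex dimension a curve is automatically Lagrangian and its Lagrangian angle is simply the angle its unit tangent makes with the $x$-axis, so condition (1) amounts to requiring that the unit tangent of $L_t$ be $V_1(\cdot,t)=(\cos\theta,\sin\theta)$. The natural candidate for the mean curvature vector of such a curve is $(\overline\nabla_{V_1}\theta)V_2$, since for an arc-length parametrized curve with tangent $V_1$ the curvature is $\overline\nabla_{V_1}\theta$ and the curvature vector is $(\overline\nabla_{V_1}\theta)V_2$. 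This motivates defining the time-dependent ambient vector field $X:=(\overline\nabla_{V_1}\theta)V_2$ on $W$, which is smooth because $\theta$ is.

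First I would assemble the building blocks by ODE theory. Let $\gamma$ be the integral curve of the time-zero field $V_1(\cdot,0)$ through $y$, parametrized so that $\gamma'(s)=V_1(\gamma(s),0)$ and $\gamma(0)=y$. Let $\Phi_t$ denote the flow of $X$ from time $0$, which exists for $t\in[0,T_y)$ with $T_y\le T$ by standard existence and continuous-dependence results, and set
\[
F(s,t):=\Phi_t(\gamma(s)),\qquad L_t:=F(\cdot,t).
\]
By construction $\partial_t F=X(F,t)$, so $F$ solves the prescribed evolution; the content of the theorem is then to show that $L_t$ is an integral curve of $V_1(\cdot,t)$ and that $X$ really is its mean curvature vector.

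The heart of the argument is a comparison, along each trajectory, of two quantities: the angle $\alpha(s,t)$ of the tangent vector $W:=\partial_s F$, and the pulled-back value $\Theta(s,t):=\theta(F(s,t),t)$. At $t=0$ they agree, since $\partial_s F(s,0)=V_1(\gamma(s),0)$. Differentiating $\Theta$ in $t$ and substituting $\partial_t F=X=(\overline\nabla_{V_1}\theta)V_2$ together with equation (4.1), the transport term $(\overline\nabla_{V_1}\theta)(\overline\nabla_{V_2}\theta)$ cancels and one is left with $\partial_t\Theta=\overline\nabla_{V_1}\overline\nabla_{V_1}\theta$ at $(F,t)$. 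On the other hand, since $\partial_t W=\partial_s(\partial_t F)=(DX)\,W$, where $DX$ is the spatial Jacobian of $X$, the angle satisfies $\partial_t\alpha=\langle JW,(DX)W\rangle/|W|^2=:G(\alpha,F,t)$, a smooth function of $\alpha$; and a direct computation, using $\langle V_2,\overline\nabla_{V_1}V_2\rangle=0$, shows that when the angle equals $\theta$ one has $G(\theta,\cdot)=\overline\nabla_{V_1}\overline\nabla_{V_1}\theta$. The decisive observation is that $F(s,t)$ is already fixed independently of $\alpha$ and $\Theta$, so both functions solve the \emph{same} scalar ODE $\partial_t u=G(u,F(s,t),t)$ in $t$ (for each fixed $s$) with identical initial data; uniqueness then forces $\alpha\equiv\Theta$. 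This is the step I expect to be the main obstacle: it is where equation (3.3) enters essentially, and where the nonlinear coupling must be arranged so as to close into an ODE to which uniqueness applies (one also checks that $W$ stays nonzero, which follows from the linearity of its evolution).

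Finally I would collect the conclusions. Having shown $\alpha=\theta\circ F$, the tangent $\partial_s F$ is a positive multiple of $V_1(F,t)$, so $L_t$ is an integral curve of $V_1(\cdot,t)$ through $\Phi_t(y)$, giving (1). Reparametrizing by arc length, the unit tangent of $L_t$ is $V_1$, its curvature vector is $(\overline\nabla_{V_1}\theta)V_2=X=\partial_t F$, which is exactly the mean curvature vector; hence the family moves by mean curvature flow and is smooth in $t$ by smoothness of $\Phi_t$, giving (2).
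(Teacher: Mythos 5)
Your proof is correct, but it runs the paper's construction in the opposite order, so the two arguments are genuinely different even though they use equation (4.1) at the same single point. The paper defines each $L_t$ \emph{as} the integral curve of $V_1(\cdot,t)$ through the moving point $\alpha(t)$ solving $\alpha'=(\overline\nabla_{V_1}\theta)V_2$; condition (1) then holds by construction, and the work is to show the variation field has the right normal component, which is done by proving that $\langle X_t,V_2\rangle-\theta_s$ satisfies the linear ODE $\partial_s(\langle X_t,V_2\rangle-\theta_s)=(\overline\nabla_{V_2}\theta)(\langle X_t,V_2\rangle-\theta_s)$ in the arc-length variable $s$ and vanishes at $s=0$. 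You instead transport the whole initial curve by the flow of the candidate curvature field $X=(\overline\nabla_{V_1}\theta)V_2$, so the evolution law holds by construction, and the work is to show the tangent direction stays equal to $V_1$; your uniqueness argument is an ODE in $t$ rather than in $s$, comparing the tangent angle $\alpha(s,\cdot)$ with $\Theta=\theta\circ F$, both of which solve $\partial_t u=\langle(-\sin u,\cos u),DX\,(\cos u,\sin u)\rangle$ with the same initial data (your verification that $G(\theta,\cdot)=\overline\nabla_{V_1}\overline\nabla_{V_1}\theta$ via $\langle V_2,\overline\nabla_{V_1}V_2\rangle=0$ is exactly the computation needed, and the cancellation of the transport term against (4.1) is correct). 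Each route buys something: the paper gets each $L_t$ as an arc-length integral curve for free but must invoke the standard ``reparametrize away the tangential part'' fact at the end, since its $X_t$ satisfies only $X_t^{\perp}=H$; your parametrization has purely normal velocity $\partial_tF=(\overline\nabla_{V_1}\theta)V_2=H$ once the angle identity is established, so no final tangential diffeomorphism is needed, at the cost of having to reparametrize each $L_t$ by arc length (your $|\partial_sF|$ drifts from $1$ for $t>0$) and of checking $\partial_sF\neq0$, which you correctly dispose of via the linearity of $\partial_tW=(DX)W$.
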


\begin{proof}
Let $y\in W_0$. Then ODE theory implies that there is a curve $\alpha$ defined on $[0, T_y)$ for some $T_y \leq T$ such that
\begin{equation*}
\begin{cases}
\alpha '(t)=(\overline \nabla _{V_1}\theta) V_2(\alpha (t),t)\\
\alpha (0)=y \ \ \ \ \ \ \ \ \ \ \ \ \ \ \ \ \ \ \ .
\end{cases}
\end{equation*}
Using points on $\alpha$ as initial data, we find the integral curve of the vector field $V_1(\cdot,t)$ which pass through $\alpha(t)$. Standard result in ODE again states that the integral curves are smoothly dependent on initial data, and thus we have actually a $\ci$ map
$$X: (-\epsilon, \epsilon) \times [0,T_y) \longrightarrow \mathbb R ^2 \ ,$$
satisfying
\[
\begin{cases}
X(0,t)=\alpha (t)\\
X_s(s,t)=V_1(X(s,t),t)=(\cos \theta(X(s,t),t), \sin \theta(X(s,t),t)) \ .
\end{cases}
\]
$\forall (s,t) \in (-\epsilon, \epsilon) \times [0,T_y)$. Note that the curvature of the integral curve equals $\theta_s=\overline \nabla _{V_1}\theta$ and equation (4.1) can be written as
\begin{equation}
\frac{\partial \theta}{\partial t} = \theta _{ss} -(\overline \nabla _{V_2}\theta)\theta_s
\end{equation}

Let $t$ be fixed. We show that $\langle X_t, V_2\rangle = \theta_s$ for all $s\in (-\epsilon, \epsilon)$. To this end, we calculate
\begin{equation*}
\begin{split}
\frac{\partial}{\partial s}(\langle X_t, V_2 \rangle -\theta_s) &= \langle X_{ts}, V_2 \rangle + \langle X_t, (V_2)_s \rangle - \theta_{ss}\\
&= \langle X_{st}, V_2\rangle + \langle X_t, -\theta_sV_1\rangle - \theta_{ss} \ .\\
\end{split}
\end{equation*}
As $X_s = V_1 = (\cos \theta, \sin \theta)$,
\begin{equation*}
\begin{split}
X_{st} &= \theta_t (-\sin \theta, \cos \theta)\\
&= \theta_t V_2 \ ,
\end{split}
\end{equation*}
where $\theta_t := \frac{\partial}{\partial t} \theta(X(s,t),t)$. So we have
\begin{equation}
\frac{\partial}{\partial s}(\langle X_t, V_2 \rangle -\theta_s) = \theta _t -\theta _s \langle X_t, V_1\rangle - \theta_{ss} \ .
\end{equation}
Now we calculate $\theta _t$. As $\theta = \theta(X(s,t),t)$
\[
\begin{split}
\theta_t &= \langle \overline \nabla \theta ,X_t \rangle + \frac{\partial \theta}{\partial t}\\
&= \theta_s \langle V_1, X_t \rangle + \overline \nabla _{V_2}\theta \langle V_2, X_t \rangle + \frac{\partial \theta}{\partial t} \ ,
\end{split}
\]
where $\frac{\partial \theta}{\partial t}$ denotes the partial derivative of $\theta : W \longrightarrow \mathbb R /2\pi \mathbb Z$ with respect to the $t$ component. Putting it back into (4.3) and using (4.2), we have
\begin{equation*}
\begin{split}
\frac{\partial}{\partial s}(\langle X_t, V_2 \rangle -\theta_s) &= \theta_s \langle V_1, X_t \rangle + \overline \nabla _{V_2}\theta \langle X_t, V_2 \rangle + \frac{\partial \theta}{\partial t}-\theta_s \langle X_t, V_1 \rangle - \theta_{ss}\\
&= \overline \nabla _{V_2}\theta \langle X_t, V_2 \rangle + \frac{\partial \theta}{\partial t} - \theta_{ss}\\
&= \overline \nabla _{V_2}\theta(\langle X_t, V_2 \rangle -\theta_s) \ \ \ \ \ \ \ .
\end{split}
\end{equation*}
At $s=0$, $X(0,t)=\alpha(t)$. So $X_t(0,t)=\alpha '(t)=(\overline \nabla _{V_1}\theta) V_2$, and we have $\langle X_t,V_2 \rangle =\overline \nabla _{V_1}\theta = \theta_s$. Then by uniqueness of ODE we conclude that $\langle X_t, V_2 \rangle = \theta_s$ for all $s\in (-\epsilon, \epsilon)$.\\
Lastly, it is a standard result in theory of mean curvature flow that if $X_t^{\perp}=H$ is satisfied, by applying diffeomorphism on each $L_t:= X((-\epsilon, \epsilon), t)$, the integral curves are actually moved by mean curvature flow. This completes the proof of the theorem.
\end{proof}

\section{Examples of Invariant Solution}
In this section we consider examples in which $\theta$ is independent of $t$. The solutions are invariant in the sense that the all Lagrangian fibrations seem to be stationary. In this case equation (3.2), or (3.3), give
\begin{equation}
\overline \nabla ^2 \theta (e, e)=0 \ ,
\end{equation}
or
\begin{equation}
\theta_{11}\cos ^2 \theta+ 2\theta_{12}\sin \theta \cos \theta + \theta_{22}\sin^2 \theta = 0 \ .
\end{equation}

\begin{example}
Obviously $\theta(x,y)=ax+by+c$ is a solution of (5.2) for any constant $a$, $b$, $c$. In the trivial case where $a=b=0$, the plane is foliated by straight lines. Except this case we consider (after a rigid motion) $\theta(x,y)=ax$ for $a>0$. The result is the well known translation invariant solution called the $grim \ reaper$ \cite{CZ}. As a graph, it is given by
\begin{equation*}
v(x)=-\frac{1}{a} \ln \cos ax + C \ ,
\end{equation*}
where $C$ is a constant and $x\neq \frac{\pi}{2a}+\frac{m\pi}{a}$, for $m\in \mathbb Z$. When $x=\frac{\pi}{2a}+\frac{m\pi}{a}$, it is filled by straight lines. So in this case, $\mathbb C$ is foliated by $grim \ reapers$ and straight lines. This is also an example that under the mean curvature flow the Lagrangian fibers do not converge to special Lagrangian fibers, namely straight lines in $\mathbb C$.
\end{example}

\begin{example}
Consider $W=\mathbb R ^2 / \{0\}$ and use standard polar coordinate $(r,\theta)$. Define $\theta_c (r, \theta)= \theta + c$, where $c\in \mathbb R /2\pi \mathbb Z$ is a constant. Using the basis $\{ \frac{\partial}{\partial r}, \frac{\partial}{\partial \theta} \}$, the Hessian of $\theta_c$ is given by
$$\overline \nabla ^2 \theta _c =
\left( \begin{array}{cc} 0 & -\frac{1}{r} \\
-\frac{1}{r} & 0
\end{array} \right)$$
As a result, $\theta_c$ satisfies (5.1) if and only if $(\cos \theta_c , \sin \theta_c)$ is parallel to either $\frac{\partial}{\partial r}$ or $\frac{\partial}{\partial \theta}$. This is true if and only if $c=0, \pi$ or $c=\frac{\pi}{2}, \frac{3\pi}{2}$. The first case corresponds to fibration of $W$ by straight rays from the origin, while the second corresponds to fibration of $W$ by concentric circles centered at the origin.
\end{example}

Intuitively, every curves in the invariant solution should not have point where curvature is zero. As an application of result in section 4 we prove the

\begin{prop}
Let $\theta$ be a solution of (5.1). Assume $\overline \nabla _{V_1}\theta(y)=0$ for some $y\in W$. If $\alpha$ is the integral curve for the vector field $V_1=(\cos \theta, \sin \theta)$ which passes through $y$. Then $\alpha$ is a straight line.
\end{prop}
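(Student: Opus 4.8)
The plan is to show that the geodesic curvature of $\alpha$ vanishes identically; since a planar curve with identically zero curvature is a straight line, this gives the conclusion. The crucial observation, already recorded in the proof of \textbf{Theorem 4.1}, is that the curvature of an integral curve of $V_1$ equals $\theta_s=\overline\nabla_{V_1}\theta$. Thus the hypothesis $\overline\nabla_{V_1}\theta(y)=0$ says exactly that the curvature of $\alpha$ vanishes at the single point $y$, and the whole task is to propagate this vanishing along all of $\alpha$.

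First I would parametrize $\alpha$ by arc length $s$, so that $\alpha'(s)=V_1(\alpha(s))=(\cos\theta,\sin\theta)$; since $|V_1|=1$ this is indeed an arc-length parametrization. Differentiating once more gives $\alpha''(s)=\theta_s(-\sin\theta,\cos\theta)=\theta_s V_2$, so the signed curvature of $\alpha$ is $\kappa(s)=\theta_s=\overline\nabla_{V_1}\theta$ evaluated along the curve, confirming the observation above.

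Next I would specialize the flow equation to the present invariant setting. Because $\theta$ is independent of $t$, the term $\frac{\partial\theta}{\partial t}$ vanishes, and equation (4.2) restricted to $\alpha$ becomes
\[
\theta_{ss}=(\overline\nabla_{V_2}\theta)\,\theta_s \ .
\]
Writing $\kappa=\theta_s$ and $a(s):=\overline\nabla_{V_2}\theta(\alpha(s))$, this is the linear homogeneous first-order ODE $\kappa'(s)=a(s)\,\kappa(s)$ along $\alpha$, whose coefficient $a$ is continuous (indeed smooth) in $s$ because $\theta\in C^\infty$. At the parameter value $s_0$ corresponding to $y$ we have $\kappa(s_0)=\overline\nabla_{V_1}\theta(y)=0$ by hypothesis, so by uniqueness for linear ODEs — equivalently, from the explicit formula $\kappa(s)=\kappa(s_0)\exp\!\big(\int_{s_0}^{s}a(\sigma)\,d\sigma\big)$ — it follows that $\kappa\equiv 0$ on the whole interval of definition. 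Hence $\alpha$ has identically vanishing curvature and is a straight line.

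The only step requiring genuine care is the reduction to this scalar ODE: I must verify that equation (4.2) specializes correctly along the integral curve, with $\theta_{ss}=\overline\nabla_{V_1}\overline\nabla_{V_1}\theta$ and the coefficient $\overline\nabla_{V_2}\theta$ well-defined and continuous along $\alpha$, so that the uniqueness theorem applies. Everything after that is a formal consequence of the linearity of the resulting ODE, which is what makes the single-point vanishing hypothesis enough to force vanishing everywhere.
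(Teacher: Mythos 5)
Your proposal is correct and follows essentially the same route as the paper: both reduce (5.1) along the integral curve to the linear ODE $\theta_{ss}=(\overline\nabla_{V_2}\theta)\theta_s$ for the curvature $\kappa=\theta_s$ and invoke uniqueness (your explicit exponential formula is just a more concrete form of the paper's appeal to ODE theory) to conclude $\kappa\equiv 0$. The extra care you take in identifying $\theta_s$ with the signed curvature via the arc-length parametrization matches the observation already made in the proof of Theorem 4.1.
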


\begin{proof}
Let $\alpha : (-\epsilon, \epsilon)\longrightarrow W$ be the integral curve for the vector field $V_1$. Then
\begin{equation*}
\begin{cases}
\alpha '(s) = V_1(\alpha (s))\\
\alpha (0) = y \ \ \ \ \ \ \ \ \ \ .
\end{cases}
\end{equation*}
As $\overline \nabla _{V_1} \theta = \theta _s$, by (5.1) we have $\theta _{ss} = (\overline \nabla _{V_2}\theta) \theta_s $. Again by result in ODE, as $\theta_s(0)=\overline \nabla _{V_1}\theta (y)=0$ we have $\theta _s(s) = 0$ for all $s$. Thus $\theta(s) \equiv c$ and $\alpha $ is a straight line.
\end{proof}

As a consequence of {\bf Proposition 5.1}, if $\overline \nabla _{V_1} \theta = 0$ in an open set $\Omega \subset W$, then all integral curves passing through $\Omega$ are straight lines. In this case $\Omega$ is foliated by special Lagrangian fibers, namely straight lines in $\mathbb C$. The case $c=0$ in {\bf Example 5.2} represents such a situation with $\Omega = W$, as
\begin{equation*}
\begin{split}
V_1 &= (\cos \theta_0, \sin \theta_0)\\
&= (\cos \theta, \sin \theta)\\
&= \frac{\partial}{\partial r} \ ,
\end{split}
\end{equation*}
thus $\overline \nabla _{V_1} \theta_0 =\frac{\partial \theta}{\partial r} = 0$.

\bibliographystyle{amsplain}

\end{document}